\newcommand{\rem}[1]{}
\newtheorem{theorem}{Theorem}[section]
\newtheorem{lemma}[theorem]{Lemma}
\newtheorem{proposition}[theorem]{Proposition}
\theoremstyle{definition}
\newtheorem{definition}[theorem]{Definition}
\newtheorem{example}[theorem]{Example}
\newtheorem{remark}[theorem]{Remark}
\numberwithin{equation}{section}
\def\NN{\mathbb N}
\def\RR{\mathbb R}
\newcommand\cL{{\mathcal{L}}}
\newcommand{\rL}{{{\mathrm{L}}}}
\newcommand{\rC}{{{\mathrm{C}}}}
\newcommand{\rT}{{{\mathrm{T}}}}
\renewcommand{\sb}{{\mathop{{\rm s}}}}
\def\dd{\mathrm{d}}
\def\dd{\di}
\newcommand\sse{\subseteq}
\newcommand\tmo{^{-1}}
\newcommand\mo{_{-1}}
\newcommand\spr{\operatorname{spr}}
\newcommand\textimplies{{\mathsurround=0pt\ensuremath{\ \Rightarrow}\ }}
\newcommand\timplies{\textimplies}
\newcommand\dom{\operatorname{dom}}
\newcommand\la{\lambda}
\newcommand\partof{{\mskip1mu\vrule height.5ex depth.9ex\mskip2mu}}
\renewcommand\phi{\varphi}
\newcommand\Emop{E_{-1,+}}
\newcommand\eul{{\rm e}}
\newcommand\di{\mathclose{}\,\mathrm{d}}
\renewcommand\le{\leqslant}
\renewcommand\leq{\leqslant}
\renewcommand\ge{\geqslant}
\renewcommand\geq{\geqslant}
\newenvironment{iiv}{\begin{enumerate}[(i)]}{\end{enumerate}}
\title[Perturbations of positive semigroups]{Perturbations of positive semigroups on AM-spaces}
\author[A. B\'atkai]{Andr\'as B\'atkai}
\author[B. Jacob]{Birgit Jacob}
\author[J. Voigt]{J\"urgen Voigt}
\author[J. Wintermayr]{Jens Wintermayr}
\address{A.B., B.J., J.W., Bergische Universit\"at Wuppertal, School of Mathematics and Natural Sciences, Gau\ss  strasse 20, 42119, Wuppertal, Germany}
\address{A.B., P\"adagogische Hochschule Vorarlberg, Liechtensteinerstra\ss e 33 - 37, 6800 Feldkirch, Austria}
\address{J.V., Technische Universit\"at Dresden, Fachrichtung Mathematik, 01062
Dresden, Germany}
\email{andras.batkai@ph-vorarlberg.ac.at}
\email{jacob@math.uni-wuppertal.de}
\email{juergen.voigt@tu-dresden.de}
\email{wintermayr@uni-wuppertal.de}
\begin{document}

\begin{abstract}We consider positive perturbations of positive semigroups on 
AM-spaces and prove a result which is the dual counterpart of a famous 
perturbation result of Desch in AL-spaces. As an application we present
unbounded perturbations of the shift semigroup.
\end{abstract}

\maketitle

\section{Introduction}\label{Intro}

Strongly continuous semigroups play a central role in operator theory, partial
differential equations, and linear systems theory, as documented in the
monographs by Engel and Nagel \cite{EN}, Pazy \cite{Pazy}, Davies \cite{Davies},
Goldstein \cite{Gol:85}, Tucsnak and Weiss \cite{TW}, or Jacob and Zwart
\cite{MR2952349}.

One of the central problems of operator semigroup theory is to decide whether a
concrete operator is the generator of a semigroup and how this semigroup is
represented. Though the famous Hille--Yosida theorem provides a complete
characterization of semigroup generators, it is practically never used in
applications because of the difficult technical conditions appearing there.

One idea is to write complicated operators as the sum of simple ones and this
is why perturbation theory became one of the major topics in semigroup theory.
The main question is: Supposing $A$ generates a $C_0$-semigroup $(T(t))_{t\geq
0}$, under which conditions on $B$ does the operator $A+B$ (suitably defined)
generate a $C_0$-semigroup?

There has been enormous development in the perturbation theory of operator
semigroups, which is well documented in the monographs by Engel and Nagel
\cite[Chapter III]{EN}, Kato \cite{Kat:80}, Banasiak and Arlotti \cite{BA},
Tucsnak and Weiss \cite[Section 5.4]{TW}, B\'atkai, Kramar Fijav\v{z} and Rhandi
\cite[Chapter 13]{BKR}.

In this note we concentrate on perturbations of positive semigroups in Banach
lattices. For Banach lattices and positive operators on them we refer to the
monographs by Aliprantis and Burkinshaw \cite{AB} or Schaefer \cite{Sch}, and
for positive semigroups to Nagel \cite{N} and B\'atkai, Kramar Fijav\v{z} and
Rhandi \cite{BKR}.

Our work is motivated by a well-known perturbation result, originally due to
Desch \cite{D} and Voigt \cite{Voi89}, which we cite here. To be able to
formulate it, we need some notions from Banach lattices and positive operators,
which will be explained in the next section.

\begin{theorem}\label{cor:miya}
Let $A$ be the generator of a positive $C_0$-semigroup $(T(t))_{t\geq 0}$ on a
real AL-space $E$. Let $B\colon \dom(A)\to E$ be a positive operator, and
assume that there exists $\la > \sb(A)$ such that $\spr(B(\la - A)\tmo) < 1$
(where $\spr$ denotes the spectral radius).
% $A+B$ is resolvent positive.
Then $A+B$ generates a positive $C_0$-semigroup on $E$.
\end{theorem}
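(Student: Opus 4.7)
The plan is to verify that $B$ is a Miyadera perturbation of $A$, i.e.\ to produce $t_0>0$ and $q\in[0,1)$ with
\[
\int_0^{t_0}\|BT(s)x\|\,\di s \le q\,\|x\|\qquad\text{for all } x\in\dom(A),
\]
and then to invoke the Miyadera--Voigt perturbation theorem. Writing $x=x^+-x^-$ and using positivity of both $T(s)$ and $B$ reduces the task to $x\ge 0$.

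The AL-hypothesis enters decisively through additivity of the norm on the positive cone: for $x\ge 0$ in $\dom(A)$ one has
\[
\int_0^t\|BT(s)x\|\,\di s = \Bigl\|\int_0^t BT(s)x\,\di s\Bigr\|,\qquad
\int_0^\infty e^{-\mu s}\|BT(s)x\|\,\di s = \|BR(\mu,A)x\|\quad(\mu>\sb(A)).
\]
Because the resolvent of a positive semigroup is monotone decreasing in $\mu$ and $B$ is positive, the assumption $\spr(BR(\lambda,A))<1$ propagates to every $\mu\ge\lambda$, and the Neumann series $(I-BR(\mu,A))^{-1}=\sum_{n\ge 0}(BR(\mu,A))^n$ converges to a bounded positive operator.

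The crux of the argument---and the main obstacle---is to upgrade the \emph{spectral radius} bound $\rho:=\spr(BR(\lambda,A))<1$ to an estimate strong enough to close the Miyadera inequality. The naive bound
\[
\int_0^{t_0}\|BT(s)x\|\,\di s \le e^{\lambda t_0}\int_0^{t_0}e^{-\lambda s}\|BT(s)x\|\,\di s \le e^{\lambda t_0}\,\|BR(\lambda,A)\|\,\|x\|
\]
would conclude under the stronger hypothesis $\|BR(\lambda,A)\|<1$, but only the spectral radius is known to be below~$1$. I plan to bridge this gap by iteration: since $\|(BR(\lambda,A))^{n_0}\|<1$ for some $n_0$, a Dyson--Phillips-type expansion of the operator $BP_{t_0}$, where $P_{t_0}x:=\int_0^{t_0}T(s)x\,\di s$, combined with AL-additivity on the positive cone, yields a geometric bound $\|BP_{t_0}x\|\le q\,\|x\|$ for $t_0$ sufficiently small. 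Equivalently, a Krein--Rutman-type renorming of $E$ (compatible with the AL-structure via the Kakutani representation theorem) reduces the situation to the norm-contractive case in which the naive estimate applies directly.

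Once the Miyadera bound is secured, the Miyadera--Voigt theorem produces a $C_0$-semigroup $(S(t))_{t\ge 0}$ generated by $A+B$ on $\dom(A)$. Positivity of $S(t)$ follows from the Dyson--Phillips representation
\[
S(t)x = \sum_{n=0}^\infty S_n(t)x,\qquad S_0(t)=T(t),\quad S_{n+1}(t)x = \int_0^t T(t-s)\,B\,S_n(s)x\,\di s,
\]
each summand being a strong integral of compositions of positive operators, hence positive; convergence of the series is guaranteed by the Miyadera bound.
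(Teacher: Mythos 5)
The paper itself offers no proof of Theorem~\ref{cor:miya}: it is quoted from Desch and Voigt, and Remark~\ref{rem-main-result}(b) only reconciles the two formulations of the hypothesis. So the relevant comparison is with the paper's proof of the dual result, Theorem~\ref{AMresult}, which has exactly the two-stage structure your proof would need: first the special case in which a \emph{norm} (not spectral radius) is $<1$ (Proposition~\ref{prop-AM-Desch}, via Desch--Schappacher and the AM-property), and then the reduction of the spectral-radius hypothesis to that case. Your skeleton --- Miyadera--Voigt plus additivity of the AL-norm on the positive cone, converting $\int_0^{t}\|BT(s)x\|\di s$ into $\bigl\|\int_0^{t}BT(s)x\di s\bigr\|$ and the Laplace transform into $\|BR(\mu,A)x\|$ --- is indeed the classical route for the AL case, so the first stage is essentially right in outline.

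The genuine gap is at the step you yourself call the crux, and neither device you offer closes it. The claim that a Dyson--Phillips-type expansion gives $\|BP_{t_0}x\|\le q\|x\|$ for $t_0$ small rests on a false premise: writing $BP_{t_0}=BR(\la,A)\bigl(\la P_{t_0}-T(t_0)+I\bigr)$ shows that $T(t_0)\to I$ only strongly, so $\|BP_{t_0}\|$ does not shrink with $t_0$; shrinking $t_0$ only buys the factor $\eul^{\la t_0}$ in front of $\|BR(\la,A)\|$, which is exactly why the spectral-radius hypothesis cannot be absorbed this way. The ``Krein--Rutman-type renorming via Kakutani'' is only named: what would actually have to be produced is an equivalent norm that is still \emph{additive on $E_+$} (so that the two displayed identities survive) and makes $BR(\la,A)$ a strict contraction --- e.g.\ $N(x):=\sum_{n}r^{-n}\|(BR(\la,A))^{n}x\|$ with $\spr(BR(\la,A))<r<1$, whose additivity on the cone uses positivity of the powers together with the AL-property --- and none of this is verified. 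The standard bridge, mirrored in the paper's proof of Theorem~\ref{AMresult}, is different: one first proves the AL analogue of Theorem~\ref{thm-res-pos} (Voigt's Theorem~1.1), namely that $\spr(B(\la-A)\tmo)<1$ is equivalent to $\la\in\rho(A+B)$ with $(\la-A-B)\tmo\ge0$; this gives $(\la-A-sB)\tmo\le(\la-A-B)\tmo$ for $0\le s\le1$, one chooses $n$ with $\|B(\la-A-B)\tmo\|<n$, and one adds the perturbation in $n$ increments $(1/n)B$, each of which satisfies the \emph{norm} condition with respect to the previously perturbed generator. A secondary but real flaw: for $x\in\dom(A)$ the parts $x^{\pm}$ need not belong to $\dom(A)$, so $BT(s)x^{\pm}$ is undefined and the asserted reduction to the positive cone is not legitimate as stated; one must route through the bounded positive operator $BR(\la,A)$ (e.g.\ dominate $|BT(s)x|$ by $BT(s)w$ for a suitable $w\in\dom(A)_+$), and one then has to check that the resulting estimate is still against $\|x\|$ rather than against $\|(\la-A)x\|$ --- a point that deserves an explicit argument.
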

% \begin{theorem}\label{cor:miya}
% Let $A$ be the generator of a positive $C_0$-semigroup $(T(t))_{t\geq 0}$ on a
% real AL-space $E$. Let $B\colon \dom(A)\to E$ be a positive operator, and
% assume that there exists $\la\in(\sb(A),\infty)\cap\rho(A+B)$ such that $(\la
% -A-B)\tmo\ge0$.
% % $A+B$ is resolvent positive.
% Then $A+B$ generates a positive $C_0$-semigroup on $E$.
% \end{theorem}

% Following Arendt \cite{Are}, we call an operator $A$ in $E$ \emph{resolvent
% positive} if there exists $w\in\RR$ such that $(w,\infty)\sse\rho(A)$ and
% $(\la-A)\tmo\ge0$ for all $\la>w$.

We refer to Remark~\ref{rem-main-result}(b) for this formulation of Desch's
result.

Let us formulate here the main theoretical result of our paper.

\begin{theorem}\label{AMresult}
Let $A$ be the generator of a positive $C_0$-semigroup $(T(t))_{t\geq 0}$ on a
real AM-space $E$. Let $B\colon E \to E_{-1}$ be a positive operator and
suppose that there is a $\lambda>\sb(A)$ such that
$\spr\bigl((\lambda-A_{-1})^{-1}B\bigr)<1$. Then the part $(A_{-1}+B)\partof _E$
of $A_{-1}+B$ in $E$ generates a positive $C_0$-semigroup on $E$.
\end{theorem}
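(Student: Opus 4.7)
The plan is to build the resolvent of $(A_{-1}+B)\partof_E$ at $\lambda$ from the spectral-radius hypothesis and then obtain generation by dualizing to the AL-space setting of Theorem~\ref{cor:miya}.

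\emph{Step 1: Candidate resolvent.} Since $(\lambda - A_{-1})^{-1}$ maps $E_{-1}$ boundedly into $E$ and $B$ maps $E$ into $E_{-1}$, the composition $C := (\lambda - A_{-1})^{-1} B$ is a bounded positive operator on $E$. The hypothesis $\spr(C) < 1$ gives
\[
(I - C)^{-1} = \sum_{n=0}^{\infty} C^n
\]
as a positive bounded operator on $E$. A direct algebraic check shows that $R_\lambda := (I - C)^{-1} (\lambda - A_{-1})^{-1}$ is a positive bounded operator $E_{-1} \to E$ whose restriction to $E$ is a two-sided inverse of $\lambda - (A_{-1}+B)\partof_E$: indeed, if $u = R_\lambda x$ for $x \in E$, then $(I-C)u = (\lambda - A_{-1})^{-1} x$ and multiplying through by $\lambda - A_{-1}$ gives $(\lambda - A_{-1} - B)u = x$ with $u, (A_{-1}+B)u \in E$. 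In particular $\lambda \in \rho((A_{-1}+B)\partof_E)$, and this resolvent is positive.

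\emph{Step 2: Dualize and apply Desch's theorem.} Since $E$ is an AM-space, $E'$ is an AL-space. The sun-dual $E^\odot = \overline{\dom(A')}^{\,\|\cdot\|_{E'}}$ is a closed sublattice of $E'$, hence itself an AL-space, and the restriction of the adjoint semigroup is a positive $C_0$-semigroup with generator $A^\odot$. Under the identification $(E_{-1})' \cong (\dom(A'), \|\cdot\|_1)$, the adjoint $B'$ is a positive operator $\dom(A') \to E'$ and restricts to a positive $B^\odot : \dom(A^\odot) \to E^\odot$. Because the spectral radius is preserved under adjoints and is non-increasing under restriction to invariant closed subspaces,
\[
\spr\bigl(B^\odot(\lambda - A^\odot)^{-1}\bigr) \le \spr\bigl(B'(\lambda - A')^{-1}\bigr) = \spr\bigl((\lambda - A_{-1})^{-1} B\bigr) < 1.
\]
Theorem~\ref{cor:miya}, applied on the AL-space $E^\odot$, then yields a positive $C_0$-semigroup on $E^\odot$ generated by $A^\odot + B^\odot$.

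\emph{Step 3: Descent, and the main obstacle.} The adjoint of the $E^\odot$-semigroup from Step~2 is a positive weak$^*$-continuous semigroup on $(E^\odot)'$, into which $E$ embeds canonically; its resolvent at $\lambda$ agrees on $E$ with the $R_\lambda$ of Step~1, so $E$ is invariant and the restriction is the sought positive $C_0$-semigroup with generator $(A_{-1}+B)\partof_E$. The main obstacle is precisely this duality bookkeeping: verifying that $E^\odot$ genuinely inherits an AL-norm, that the identification $(E_{-1})' \cong (\dom(A'), \|\cdot\|_1)$ respects both order and norm, and above all that the dualized semigroup restricts to $E$ with strong rather than merely weak$^*$-continuity. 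The last point is not automatic; it must be forced from the resolvent identity of Step~1 together with the density of the relevant domain in $E$, which is guaranteed because $A$ already generates a $C_0$-semigroup on $E$.
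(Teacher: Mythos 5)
Your Step 1 is fine (it is essentially the paper's Theorem~\ref{thm-res-pos}/Lemma~\ref{lemma-res-pos}), but Step 2 contains a gap that breaks the whole dualization strategy: there is no reason why the adjoint of $B$ should act in the sun-dual. Indeed, $B\in\cL(E,E\mo)$ has adjoint $B'\in\cL\bigl((E\mo)',E'\bigr)$ with $(E\mo)'\cong\dom(A')$, but nothing forces $B'$ to map $\dom(A^\odot)$ (or even $\dom(A')$) into $E^\odot=\overline{\dom(A')}$. Concretely, take the nilpotent shift of Example~\ref{application}, $E=\{f\in\rC[0,1]:f(1)=0\}$, and a rank-one perturbation $Bf:=\langle\phi,f\rangle\, h$ with $h\in\Emop$ and $\phi\in E'_+$ a Dirac functional. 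All hypotheses of Theorem~\ref{AMresult} are met (the norm $\|(\la-A\mo)\tmo B\|=\|\phi\|\,\|(\la-A\mo)\tmo h\|_E$ tends to $0$ as $\la\to\infty$), yet $B'x'=\langle x',h\rangle\,\phi$ has range spanned by $\phi\notin E^\odot$, so the operator $B^\odot\colon\dom(A^\odot)\to E^\odot$ you need for Theorem~\ref{cor:miya} simply does not exist. (Your claim that $E^\odot$ is an AL-space is itself a nontrivial fact needing the result that the sun-dual of a positive semigroup is a closed ideal, but that is a citable lemma; the mapping property of $B'$ is the unfixable point.) A secondary, acknowledged but unresolved, issue is Step 3: even granting Step 2, strong continuity of the restricted semigroup on $E$ and the identification of its generator with $(A\mo+B)\partof_E$ require genuine arguments (exponential formulas plus a weak-to-strong continuity step), not just the resolvent identity.

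The paper avoids duality altogether and argues directly on $E$: first Proposition~\ref{prop-AM-Desch} treats the case $\|R(\la,A\mo)B\|<1$ by verifying the Desch--Schappacher conditions of Theorem~\ref{EN}, where the AM-property is used to estimate $\int_0^\tau T\mo(\tau-s)Bu(s)\di s$ for positive step functions by a single element $z=\sup_n u_n$, and positivity of the perturbed semigroup comes from the Dyson--Phillips series; then the general spectral-radius hypothesis is reduced to finitely many such norm-small steps via the resolvent-positivity bounds of Theorem~\ref{thm-res-pos} (Voigt's iteration with perturbations $(1/n)B$, using that the extrapolation space does not change at each step). If you want to keep a duality-flavoured intuition, you may view the paper's result as the order-dual counterpart of Desch's theorem, but the proof cannot be obtained by literally transposing Theorem~\ref{cor:miya} through $E^\odot$ for exactly the reason above.
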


We recall that the part of $A_{-1}+B$ in $E$ is the restriction of $A_{-1}+B$
to the domain $\dom\bigl((A_{-1}+B)\partof_E\bigr):=\{f\in E: (A_{-1}+B)f\in
E\}$, considered as an operator in $E$.
The extrapolation space $E_{-1}$ and the extrapolated operator $A_{-1}$ are
explained in Section \ref{posextraspaces}. In Section \ref{pert-res-pos} we discuss a technical tool, the perturbation of so-called resolvent positive operators. The main result is proved in Section
\ref{perturbationtheory}. Finally, an application is discussed in detail in
Section \ref{examples}.

\section{Extrapolation Spaces and Positivity}\label{posextraspaces}

Let $X$ be a Banach space and $(T(t))_{t\geq 0}$
% \subset \cL(X)$ be
a $C_0$-semigroup on $X$, with generator $A$. For $\lambda\in\rho(A)$ we
define
% the interpolation  space as $X_1:=(\dom(A),\|\cdot\|_{1})$, where
% $\|f\|_{1}:=\|(\lambda-A)f\|$, and
the extrapolation space as the completion
$X_{-1}:=(X,\|\cdot\|_{-1})^{\thicksim}$, where $\|f\|_{-1}:=\|R(\lambda,A)f\|$.
Here $R(\lambda,A) = (\lambda-A)^{-1}$ is the resolvent of $A$.
% The associated semigroup is given by
%  $T_{-1}(t)$ as the continuous extension of $T(t)$. It is
% well-known that $X_{-1}$ is a Banach space and the corresponding
% semigroup $(T_{-1}(t))_{t\geq 0}$ is again strongly
% continuous with generator
% $A_{-1}$ and $\dom(A_{-1})=X$. 
For all $t\ge0$, the operator $T(t)$ has a unique extension 
$T_{-1}(t)\in\cL(X\mo)$, and $(T_{-1}(t))_{t\geq 0}$ is a $C_0$-semigroup on 
$X\mo$, with generator
$A_{-1}$ satisfying $\dom(A_{-1})=X$. 
Moreover, these definitions are
independent of the choice of $\lambda\in\rho(A)$, meaning that a different
$\lambda\in\rho(A)$ generates the same space with equivalent norms. We refer
the reader to Engel and Nagel \cite[Chapter II.5]{EN} for these properties 
and for more on this subject.

\begin{definition}\label{pos}
Let $E$ be a Banach lattice and $E_{-1}$ the extrapolation space for the
positive $C_0$-semigroup $(T(t))_{t\geq 0}$.
We say that $f\in E_{-1}$ is \em positive\em, if $f$ belongs to the closure of
$E_+$ in $E_{-1}$.
% there exists a sequence $(f_n)_{n\in\NN}\subset E_+$ such that
% \begin{equation*} \|f-f_n\|_{{-1}}\longrightarrow 0 \mbox{ for
% }n\rightarrow\infty. \end{equation*}
We denote by $\Emop$ the set of all positive elements in $E_{-1}$.
\end{definition}
From the definition the set of positive elements satisfies $E_+\sse \Emop$.
By
$$\sb(A)=\sup\{\operatorname{Re}(\lambda)\,:\,\lambda\in\sigma(A)\}$$
we denote the \emph{spectral bound} of $A$. We recall that for generators of
positive semigroups $R(\lambda,A)\geq 0$ holds for $\lambda>\sb(A)$.

\begin{remark}
In the context of Definition~\ref{pos},
for $\la>\sb(A_{-1})=\sb(A)$ it is easy to see that
$R(\lambda,A_{-1})\in \cL(E_{-1})$ is a positive operator (i.e., maps positive
elements to positive elements). In fact, it will follow from Proposition~\ref{cone}
that $R(\lambda,A_{-1})$ is also positive as an operator in $\cL(E_{-1},E)$, see Remark \ref{rem-pos} below.

Let $B\in\cL(E,E_{-1})$. If $B$ is positive, i.e.\ $Bf\ge 0$ for all $f\in 
E_+$, then $R(\lambda,A_{-1}) B$ is
positive as an operator in $\cL(E)$, for all $\la>\sb(A)$. Conversely, if there
exists a sequence $(\la_n)_{n\in\NN}$ in $(\sb(A),\infty)$ tending to
$\infty$, and such that $R(\lambda_n,A_{-1}) B\ge 0$ for all $n\in\NN$, then
$B\ge0$. Indeed, if $f\in E_+$, then $R(\lambda_n,A_{-1}) Bf\in E_+$ for all
$n\in\NN$, and the convergence $\la_nR(\lambda_n,A_{-1}) Bf \to Bf$ in $E_{-1}$
($n\to\infty$) implies $Bf\in\Emop$.

In Example~\ref{RBpositive} we will show that positivity of $R(\lambda,A_{-1})
B$ for only a single $\la>\sb(A_{-1})$ does not imply the positivity of $B$.
\end{remark}

Next we establish some basic properties of the ordering on $E_{-1}$.

\begin{proposition}\label{cone}
Let $E$ be a real Banach lattice and $(T(t))_{t\geq 0}$ a positive
$C_0$-semigroup on $E$. The set $\Emop$ is a closed convex cone in $E\mo$, 
satisfying
\begin{equation*} E_+=\Emop\cap E. \end{equation*}
\end{proposition}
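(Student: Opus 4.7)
The plan is to handle the two assertions separately, with the cone structure being essentially formal and the identity $E_+=E_{-1,+}\cap E$ being the substantive claim.

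First I would dispatch the closed-convex-cone part. Since $E_{-1,+}$ is by definition the closure in $E_{-1}$ of $E_+$, it is automatically closed. Because $E_+$ is a convex cone in $E$, and both addition and multiplication by a non-negative scalar are continuous operations on the Banach space $E_{-1}$, the closure $E_{-1,+}$ inherits the cone and convexity properties.

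Next, the inclusion $E_+\sse E_{-1,+}\cap E$ is immediate: $E_+\sse E$ trivially, and $E_+\sse E_{-1,+}$ by definition. The real content is the reverse inclusion $E_{-1,+}\cap E\sse E_+$, and this is where I expect the main difficulty, since convergence in $E_{-1}$ is strictly weaker than convergence in $E$, so we cannot directly transfer a limit of positive elements from $E_{-1}$ back to $E_+$.

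The idea I would use to overcome this is a two-step squeeze via the resolvent. Let $f\in E_{-1,+}\cap E$, and pick a sequence $(f_n)\sse E_+$ with $f_n\to f$ in $E_{-1}$. Fix $\la>\sb(A)$, so that $R(\la,A)$ is a positive operator on $E$. Then $R(\la,A)f_n\in E_+$ for every $n$, and by the very definition of $\|\cdot\|_{-1}$ we have $R(\la,A)f_n\to R(\la,A)f$ in the norm of $E$; closedness of $E_+$ in $E$ yields $R(\la,A)f\in E_+$. Multiplying by $\la>0$ preserves positivity, so $\la R(\la,A)f\in E_+$ for every sufficiently large $\la$.

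The final step is to let $\la\to\infty$. Because $f\in E=\overline{\dom(A)}$, the standard approximation property for generators of $C_0$-semigroups gives $\la R(\la,A)f\to f$ in $E$. Since each approximant lies in $E_+$ and $E_+$ is closed in $E$, we conclude $f\in E_+$, as required. The main obstacle, as noted, is bridging the gap between $E_{-1}$- and $E$-convergence; the resolvent serves simultaneously to realize the $E_{-1}$-norm and, via its Yosida-type approximation, to recover the original element in $E$.
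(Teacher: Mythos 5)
Your proof of the identity $E_+=E_{-1,+}\cap E$ is exactly the paper's argument: the resolvent turns the $E_{-1}$-convergence $f_n\to f$ into $E$-convergence $R(\la,A)f_n\to R(\la,A)f$ (using that $\|\cdot\|_{-1}$ may be defined with any $\la\in\rho(A)$), closedness of $E_+$ in $E$ gives $R(\la,A)f\in E_+$ for all $\la>\sb(A)$, and $\la R(\la,A)f\to f$ finishes. No issues with that half.

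The gap is in the ``closed convex cone'' part. As the paper's own proof makes clear, the assertion includes properness of the cone, $E_{-1,+}\cap(-E_{-1,+})=\{0\}$, which is what makes $E_{-1,+}$ induce an antisymmetric order on $E_{-1}$ and is needed later when inequalities between $E_{-1}$-valued operators are manipulated. Your closure argument only yields the wedge properties (stability under sums and nonnegative scalars, convexity, closedness); properness does not follow formally, and it is not settled by $E_+=E_{-1,+}\cap E$ either, since an element of $E_{-1,+}\cap(-E_{-1,+})$ need not lie in $E$. The paper proves it by a squeeze: if $f_n\to f$ and $g_n\to -f$ in $E_{-1}$ with $f_n,g_n\in E_+$, then $0\le f_n\le f_n+g_n$ in $E$, hence $0\le R(\la,A)f_n\le R(\la,A)(f_n+g_n)$ by positivity of the resolvent, and monotonicity of the lattice norm gives $\|f_n\|_{-1}=\|R(\la,A)f_n\|\le\|R(\la,A)(f_n+g_n)\|=\|f_n+g_n\|_{-1}\to 0$, so $f=0$. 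Alternatively, your own resolvent device adapts: for the defining $\la$, $R(\la,A_{-1})$ is an injective isometry of $E_{-1}$ into $E$ mapping $E_{-1,+}$ into $E_+$, so $f\in E_{-1,+}\cap(-E_{-1,+})$ gives $R(\la,A_{-1})f\in E_+\cap(-E_+)=\{0\}$ and hence $f=0$. Either way, this step must be added to your argument.
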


\begin{proof}
Taking closures in the inclusions
$E_++E_+\sse E_+$ and $\alpha E_+\sse E_+$ for $\alpha\geq 0$, one obtains the
corresponding inclusions for $\Emop$. Also, $\Emop$ is closed as the
closure of $E_+$.
% Using the triangle inequality, it is easy to see that
% $\Emop+\Emop\subset \Emop$ holds, and further $\alpha
% \Emop\subset \Emop$ for any $\alpha\geq 0$ holds by the homogeneity of
% the norm.
To show $\Emop\cap (-\Emop)=\{0\}$, let $f\in \Emop$
and assume also that $-f\in \Emop$. Then there exist sequences
$(f_n)_{n\in\NN},(g_n)_{n\in\NN}$ in $E_+$ such that $f_n\to f$ and $g_n\to -f$
in $E\mo$, and thereby $f_n+g_n\to
0$ in $E\mo$, as $n\to\infty$.
% such that $\|f-f_n\|_{-1}\rightarrow
% 0$ and $\|-f-g_n\|_{-1}\rightarrow 0$ for $n\rightarrow\infty$.
Choose $\la>\sb(A)$ and let the norm $\|\cdot\|_{-1}$ be defined in terms of
this $\la$. Note that
$0\leq f_n\leq f_n+g_n$, and hence $0\le R(\lambda,A)f_n\leq
R(\lambda,A)(f_n+g_n)$, by the positivity of the semigroup.
% Since $E$ is a Banach lattice, it follows that
Therefore
\[
\|f_n\|_{-1} = \|R(\lambda,A)f_n\| \leq \|R(\lambda,A)(f_n+g_n)\|=\|f_n+g_n
\|_{-1}\to 0
\]
as $n\to\infty$. This shows that $f=0$.

Finally, to show that the definition of positivity in the extrapolation space
is compatible with the original ordering, we note that $E_+\subseteq \Emop\cap
E$ is immediate from the definition.

To prove the reverse inclusion let $f\in \Emop\cap E$. Then there exists a
sequence $(f_n)_{n\in\NN}$ in $E_+$ such that $\|f-f_n\|_{-1}\to0$
($n\to\infty$). Recalling that the norm $\|\cdot\|_{-1}$ can be defined using
any $\la \in\rho(A)$ we obtain
\begin{equation*}
\|R(\la,A)f - R(\la,A)f_n\| \to 0 \text{ as }n\to \infty,
\end{equation*}
for all $\la>\sb(A)$. Because of $R(\la,A)f_n\in E_+$ for all $n\in\NN$ this
implies that $R(\la,A)f\in E_+$ for all $\la>\sb(A)$. From $\la R(\la,A)f \to
f$ (in $E$) as $\la\to\infty$ we therefore obtain $f\in E_+$.
\end{proof}

\begin{remark}\label{rem-pos}
It is important to keep in mind the following simple consequence of the
properties shown in Proposition~\ref{cone}.

In the context of this proposition, let $C\colon E_{-1}\to E$ be an operator.
Then $C$ is positive if and only if $C$ is positive as an operator from
$E_{-1}$ to $E_{-1}$.
\end{remark}

\begin{remark}
The extrapolation space for a positive semigroup is not a Banach lattice, in
general. This will be shown by Examples \ref{application} and \ref{AMmitunit}.
\end{remark}

If the norm on a Banach lattice $E$ satisfies
\begin{equation}
\|\sup\{f,g\}\|=\sup\{\|f\|,\|g\|\}
\end{equation}
for all $f,g\in E_+$, then the Banach lattice $E$ is called an \emph{abstract
M-space} or an \emph{AM-space}. If the norm on a Banach lattice $E$ satisfies
\begin{equation}
\|f+g\|=\|f\|+\|g\|
\end{equation}
for all $f,g\in E_+$, then the Banach lattice $E$ is called an \emph{abstract L-space} or an \emph{AL-space}.

\section{Perturbations of Resolvent Positive Operators}\label{pert-res-pos}

In this section we provide a technical tool which will be needed in the proof
of the main result.

Let $E,F$ be ordered real Banach spaces.
% , $E\sse F$ with continuous dense
% embedding, and assume that $F_+ = \ol{{E_+}}^F$ and $E_+ = F_+\cap E$.
Let $E_+$ be \emph{generating} ($E=E_+-E_+$) and \emph{normal}
($E' = E'_+-E'_+$), and assume that the norm on $\cL(E)$ is
\emph{monotone} ($A,B\in\cL(E)$, $0\le A\le B$ implies $\|A\|\le\|B\|$). (These
conditions on $E$ are satisfied if $E$ is a Banach lattice.)

\begin{lemma}\label{lemma-res-pos}
Let $Q\in\cL(E,F)$ be an isomorphism of Banach spaces, $Q\tmo\colon F\to E$
positive, and let $B\colon E\to F$ be a positive operator. (This implies
that $Q\tmo B\in\cL(E)$ by Batty and Robinson \cite[Proposition 1.7.2]{Bat-Rob}; hence
$B\in\cL(E,F)$.) Then the following conditions are equivalent:
\begin{iiv}
\item $\spr(Q\tmo B) < 1$,
\item $Q-B$ continuously invertible, with $(Q-B)\tmo\in\cL(F,E)$ positive.
\end{iiv}
If these properties are satisfied, then
\begin{equation*} %\label{eq-res-op}
(Q-B)\tmo = \biggl(\sum_{n=0}^\infty\bigl(Q\tmo B\bigr)^n\biggr)Q\tmo \ge Q\tmo.
\end{equation*}
\end{lemma}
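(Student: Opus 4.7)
The plan is to handle the two implications by exploiting the factorisation $Q-B=Q(I-T)$ with $T:=Q\tmo B\in\cL(E)$, using the Neumann series for the easy direction and a spectral-radius-in-the-spectrum argument for the harder one. Throughout, $T\ge 0$ as a composition of the two positive operators $B$ and $Q\tmo$.

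For (i)$\Rightarrow$(ii), the assumption $\spr(T)<1$ gives the absolute convergence of $\sum_{n=0}^\infty T^n=(I-T)\tmo$ in $\cL(E)$; positivity of each partial sum together with closedness of the positive cone in $\cL(E)$ yields $(I-T)\tmo\ge 0$. Hence $(Q-B)\tmo=(I-T)\tmo Q\tmo=\bigl(\sum_{n=0}^\infty T^n\bigr)Q\tmo\in\cL(F,E)$ is positive as a composition of positive operators, and both the displayed series representation and the inequality $(Q-B)\tmo\ge Q\tmo$ drop out immediately, the latter by retaining only the $n=0$ term.

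For (ii)$\Rightarrow$(i), assume $(Q-B)\tmo\in\cL(F,E)$ exists and is positive. Since $Q-B=Q(I-T)$, the operator $S:=(Q-B)\tmo Q$ is the inverse of $I-T$ in $\cL(E)$; multiplying out $(Q-B)\tmo(Q-B)=I$ also gives $S=I+(Q-B)\tmo B$, so $S\ge I$. The telescoping identity $(I-T)\sum_{k=0}^{n-1}T^k=I-T^n$, multiplied on the left by $S$, yields $\sum_{k=0}^{n-1}T^k=S-T^n S$, whence $0\le T^n\le T^n S\le S$ for every $n$ (using $T^n\ge 0$ and $S\ge I$ for the middle inequality, and $\sum_{k=0}^{n-1}T^k\ge 0$ for the last). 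Monotonicity of the norm on $\cL(E)$ gives $\|T^n\|\le\|S\|$, and Gelfand's formula then yields $\spr(T)\le 1$.

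The main obstacle is upgrading this to the \emph{strict} inequality $\spr(T)<1$. For this I would invoke the classical Krein--Rutman/Schaefer type result that the spectral radius of a positive operator on an ordered real Banach space with normal, generating cone lies in the spectrum, which is available under the standing assumptions on $E$. Since $I-T$ is invertible we have $1\in\rho(T)$, so $\spr(T)\ne 1$; combined with $\spr(T)\le 1$ this forces $\spr(T)<1$, as required.
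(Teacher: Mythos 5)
Your proof is correct and takes essentially the same route as the paper: the factorization $Q-B=Q(I-Q^{-1}B)$ with the Neumann series for (i)$\Rightarrow$(ii), and for (ii)$\Rightarrow$(i) a telescoping identity plus positivity and norm monotonicity to bound the powers of $Q^{-1}B$, followed by the Pringsheim/Schaefer theorem that the spectral radius of a positive operator lies in its spectrum. The only (harmless and in fact slightly more direct) deviation is that you get $1\in\rho(Q^{-1}B)$ straight from the invertibility $I-Q^{-1}B=Q^{-1}(Q-B)$, whereas the paper instead derives a uniform bound on $(\mu-Q^{-1}B)^{-1}$ for $\mu>1$ to conclude $[1,\infty)\subseteq\rho(Q^{-1}B)$.
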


\begin{proof}
First we show `(i)\timplies(ii)' and the additional assertion. Combining
condition~(i) and $Q\tmo B \ge 0$ with the Neumann series we obtain
\begin{equation*}
\bigl(I -Q\tmo B\bigr)\tmo = \sum_{n=0}^\infty\bigl(Q\tmo B\bigr)^n \ge I
\ge 0,
\end{equation*}
where $I$ denotes the identity operator in $E$.
Using the decomposition
\begin{equation*}
Q-B =Q\bigl(I - Q\tmo B)
\end{equation*}
one sees the  continuous invertibility of $Q-B$ and
\begin{equation*}
(Q-B)\tmo = \bigl(I - Q\tmo B)\tmo Q\tmo
= \biggl(\sum_{n=0}^\infty\bigl(Q\tmo B\bigr)^n\biggr)Q\tmo \ge Q\tmo.
\end{equation*}

For the proof of `(ii)\timplies(i)' we first note that the identity
\begin{equation*}
\sum_{j=0}^n\bigl(Q\tmo B)^j Q\tmo (Q-B) = I -\bigl(Q\tmo B)^{n+1}
\end{equation*}
implies
\begin{align*}
\sum_{j=1}^{n+1}\bigl(Q\tmo B\bigr)^j
&=\Bigl(I-\bigl(Q\tmo B\bigr)^{n+1}\Bigr)(Q-B)\tmo B\\
&=(Q-B)\tmo B -\bigl(Q\tmo B\bigr)^{n+1} (Q-B)\tmo B
\le (Q-B)\tmo B.
\end{align*}
Now the monotonicity of the norm in $\cL(E)$ implies
$(1,\infty)\sse\rho\bigl(Q\tmo B\bigr)$ and
\begin{equation*}
\sup_{\mu>1}\bigl\|\bigl(\mu -Q\tmo B\bigr)\tmo\bigr\|
\le\bigl\|I + \bigl(Q-B)\tmo B\bigr\|,
\end{equation*}
and this implies that $[1,\infty)\sse\rho(Q\tmo B)$. From the positivity
of $Q\tmo B$ we obtain $\spr(Q\tmo B)\in\sigma(Q\tmo B)$,
by Pringsheim's theorem (see Schaefer \cite[Appendix, 2.2]{Sch0}), and this finally
implies $\spr(Q\tmo B)< 1$.
\end{proof}

% \begin{remark}\label{rem-res-pos}
% We mention that, under the above hypotheses on $E$ and $F$, an operator
% $C\colon F\to
% F$ with $\ran(C)\sse E$ belongs to $\cL(F)$ if and only if it belongs to
% $\cL(F,E)$, and $C$ is positive as an operator in $F$ if and only if it is
% positive as an operator $C\colon F\to E$. This shows that condition (ii) in
% Lemma~\ref{lemma-res-op} could have been formulated equivalently by requiring
% that $Q-B$ is invertible, with positive inverse in $\cL(F)$.
% \end{remark}

Now we specialise the hypotheses to the case that $E$§ is a real Banach
lattice, $(T(t))_{t\ge0}$ a positive $C_0$-semigroup on $E$, with generator
$A$, and that $F:=E_{-1}$ is the corresponding extrapolation space.
The following result is a version of Voigt \cite[Theorem~1.1]{Voi89}, with the
operator product $B(\la-A)\tmo$ replaced by $(\la-A\mo)\tmo B$.

\begin{theorem}\label{thm-res-pos}
Let $\la > \sb(A)$, and let $B\colon E\to E_{-1}$ be a positive operator.
(Recall from Lemma~\ref{lemma-res-pos} that then $(\la-A\mo)\tmo B\in\cL(E)$
and $B\in\cL(E,E_{-1})$.)
Then the following conditions are equivalent:
\begin{iiv}
\item $\spr\bigl((\la - A_{-1})\tmo B\bigr) < 1$,
\item $\la\in\rho(A_{-1} + B)$ and $(\la -A_{-1} - B)\tmo \ge 0$.
\end{iiv}
(In condition (ii) we consider $A_{-1}$ and $B$ as operators in
$E_{-1}$, with domain $E$.)

If these properties are satisfied, then
\begin{equation*}
(\la-A_{-1}-B)\tmo
= \biggl(\sum_{n=0}^\infty\bigl((\la-A_{-1})\tmo
B\bigr)^n\biggr)(\la-A_{-1})\tmo
\ge (\la - A_{-1})\tmo,
\end{equation*}
$\sb(A_{-1}+B) < \la$, and $(\mu - A_{-1}-B)\tmo \ge 0$ for all $\mu \ge \la$
(i.e., $A_{-1}+B$ is
resolvent positive, in the terminology of Arendt \cite{Are}).
\end{theorem}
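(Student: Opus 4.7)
I would deduce the theorem from Lemma~\ref{lemma-res-pos} by setting $F := E_{-1}$ and $Q := \lambda - A_{-1}$, viewed as a bounded operator from $E = \dom(A_{-1})$ to $E_{-1}$. Since $\lambda > \sb(A) = \sb(A_{-1})$, one has $\lambda \in \rho(A_{-1})$, so $Q$ is a topological isomorphism from $E$ onto $E_{-1}$ with $Q^{-1} = (\lambda - A_{-1})^{-1}$. This inverse is positive as an operator in $\cL(E_{-1})$ (by the remark after Proposition~\ref{cone}), and hence also positive as an operator in $\cL(E_{-1},E)$ by Remark~\ref{rem-pos}. Since $B\colon E \to E_{-1}$ is positive by assumption, all hypotheses of Lemma~\ref{lemma-res-pos} are in force.

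\textbf{Equivalence and series.} With this identification, $Q\tmo B = (\lambda - A_{-1})\tmo B$ and $Q - B = \lambda - A_{-1} - B$, so Lemma~\ref{lemma-res-pos} gives (i)$\;\Leftrightarrow\;$(ii) together with the identity
\begin{equation*}
(\lambda - A_{-1} - B)\tmo = \biggl(\sum_{n=0}^\infty \bigl((\lambda - A_{-1})\tmo B\bigr)^n\biggr)(\lambda - A_{-1})\tmo \ge (\lambda - A_{-1})\tmo
\end{equation*}
as an element of $\cL(E_{-1},E)$. Viewing $A_{-1}+B$ as an operator in $E_{-1}$ with domain $E$, this inverse is in particular a bounded positive map $E_{-1} \to E_{-1}$, once again by Remark~\ref{rem-pos}, so $\lambda \in \rho(A_{-1}+B)$ and $(\lambda - A_{-1} - B)\tmo \ge 0$ in the sense required by (ii).

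\textbf{Extension to $\mu \ge \lambda$.} For the final assertion, I would first record that $A_{-1}$ is resolvent positive beyond $\sb(A)$: the resolvent equation
\begin{equation*}
(\mu - A_{-1})\tmo = (\lambda - A_{-1})\tmo - (\mu - \lambda)(\mu - A_{-1})\tmo(\lambda - A_{-1})\tmo
\end{equation*}
shows $0 \le (\mu - A_{-1})\tmo \le (\lambda - A_{-1})\tmo$ for $\mu \ge \lambda$. Composing on the right with the positive operator $B$ yields $0 \le (\mu - A_{-1})\tmo B \le (\lambda - A_{-1})\tmo B$ in $\cL(E)$, so by monotonicity of the operator norm on $\cL(E)$ one gets $\bigl\|\bigl((\mu - A_{-1})\tmo B\bigr)^n\bigr\| \le \bigl\|\bigl((\lambda - A_{-1})\tmo B\bigr)^n\bigr\|$. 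Gelfand's formula then gives $\spr\bigl((\mu - A_{-1})\tmo B\bigr) \le \spr\bigl((\lambda - A_{-1})\tmo B\bigr) < 1$, and applying the already-proved equivalence with $\mu$ in place of $\lambda$ yields $\mu \in \rho(A_{-1}+B)$ with $(\mu - A_{-1} - B)\tmo \ge 0$; in particular $\sb(A_{-1}+B) < \lambda$.

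\textbf{Main obstacle.} The only subtle point is the viewpoint shift between positivity in $\cL(E_{-1},E)$ and in $\cL(E_{-1})$, since $E_{-1}$ is not a lattice and its positive cone is defined via closure of $E_+$. Remark~\ref{rem-pos} (based on Proposition~\ref{cone}) is the tool that reconciles the two, and once it is invoked the rest is a direct transcription of Lemma~\ref{lemma-res-pos} plus routine resolvent calculus.
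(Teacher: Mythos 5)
Your proof is correct and follows essentially the same route as the paper: apply Lemma~\ref{lemma-res-pos} with $Q=\la-A_{-1}$ and $F=E_{-1}$, reconcile the two notions of positivity via Remark~\ref{rem-pos}, and then get resolvent positivity on $[\la,\infty)$ from the monotonicity of $\mu\mapsto\spr\bigl((\mu-A_{-1})\tmo B\bigr)$. The only difference is that you spell out (resolvent identity, domination, monotone norm, Gelfand's formula) the monotonicity step that the paper's proof declares immediate.
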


\begin{proof}
With $Q:=\la- A_{-1}$ and in view of Remark~\ref{rem-pos}, all
the statements except for the last one are immediate consequences of
Lemma~\ref{lemma-res-pos}. However, from (i) one immediately obtains
$\spr\bigl((\mu-A_{-1})\tmo B\bigr) < 1$ for all $\mu\ge\la$, therefore
$[\la,\infty)\sse \rho(A_{-1}+B)$ and $(\mu - A_{-1}-B)\tmo\ge0$ for all
$\mu\ge\la$.
\end{proof}

\section{Perturbation Theory with Positive Operators}\label{perturbationtheory}

The cornerstone of the proof of our main result will be the Desch--Schappacher
perturbation theorem, which we cite here from Engel and Nagel \cite[Chapter III,
Corollaries 3.2 and 3.3]{EN}.

\begin{theorem}\label{EN}
Let $A$ be the generator of a $C_0$-semigroup $(T(t))_{t\geq 0}$
on a Banach space $X$ and let $B\in \cL(X,X_{-1})$. Moreover, assume that there
exist $\tau>0$ and $K\in [0,1)$ such that
\begin{iiv}
\item $\displaystyle\int_0^{\tau}T_{-1}(\tau-s)Bu(s)\dd s\in X$,
\item $\displaystyle \left\|\int_0^{\tau}T_{-1}(\tau-s)Bu(s)\dd s \right\|\leq K\|u\|_{\infty}$
\end{iiv}
for all continuous functions $u\in \rC([0,\tau],X)$. Then the operator
% $C:=A_{-1}+B$ with $\dom(C)=\{f\in X \, :\, A_{-1}f+B f\in X \}$
$(A_{-1}+B)\partof_X$
generates a
$C_0$-semigroup $(S(t))_{t\geq 0}$ on $X$. Furthermore this
semigroup is given by the Dyson--Phillips series
\begin{equation}\label{DP-1}
S(t)=\sum\limits_{n=0}^{\infty}S_n(t), \mbox{ for all }t\geq 0,
\end{equation}
where $S_0(t):=T(t)$ and
\begin{equation}\label{DP-2}
S_n(t)f:=\int_0^tT_{-1}(t-s)BS_{n-1}(s)f\dd s \mbox{ for all }f\in X.
\end{equation}
In this case $B$ is said to be a Desch--Schappacher perturbation of $A$.

\end{theorem}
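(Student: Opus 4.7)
The plan is to construct $(S(t))_{t\geq 0}$ explicitly via the Dyson--Phillips series \eqref{DP-1}--\eqref{DP-2} and then verify that it is a $C_0$-semigroup with generator $(A_{-1}+B)\partof_X$. The guiding identity is the variation of constants formula $S(t)f=T(t)f+\int_0^t T_{-1}(t-s)BS(s)f\di s$, which is solved formally by the iteration starting from $S_0=T$. I would first rephrase hypotheses~(i) and~(ii) as saying that the Volterra operator
\begin{equation*}
\mathcal{V}_\tau\colon \rC([0,\tau],X)\to X,\qquad \mathcal{V}_\tau u:=\int_0^{\tau}T_{-1}(\tau-s)Bu(s)\di s
\end{equation*}
is well defined and bounded with $\|\mathcal{V}_\tau\|\le K$. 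A preliminary extension lemma is then needed: for every $t\in[0,\tau]$ and $u\in \rC([0,t],X)$ one also has $\mathcal{V}_t u:=\int_0^t T_{-1}(t-s)Bu(s)\di s\in X$ with $\|\mathcal{V}_t u\|\le K\|u\|_\infty$. This is obtained by extending $u$ continuously to $[0,\tau]$ in a controlled way and exploiting the semigroup law of $(T_{-1}(t))_{t\ge 0}$ to isolate the contribution of $[0,t]$ from that of $[t,\tau]$.

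Next I would prove by induction on $n$ that $S_n(t)\in\cL(X)$, that $t\mapsto S_n(t)f$ is continuous from $[0,\tau]$ into $X$, and that
\begin{equation*}
\|S_n(t)\|\le K^n M, \quad t\in[0,\tau],\quad \text{with } M:=\sup_{s\in[0,\tau]}\|T(s)\|,
\end{equation*}
the step resting on $\|S_n(t)f\|=\|\mathcal{V}_t(S_{n-1}(\cdot)f)\|\le K\sup_{s\in[0,t]}\|S_{n-1}(s)f\|$. Since $K<1$, the series converges in operator norm uniformly on $[0,\tau]$ to a strongly continuous family $(S(t))_{t\in[0,\tau]}$ with $S(0)=I$, which by summing the recursion satisfies the variation of constants equation. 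For the semigroup property $S(t+s)=S(t)S(s)$ I would split the integral $\int_0^{t+s}T_{-1}(t+s-r)BS(r)f\di r$ at $r=s$, substitute $r=s+r'$ in the second piece, and apply $T_{-1}(t)\partof_X=T(t)$ to obtain $S(t+s)f=T(t)S(s)f+\int_0^{t}T_{-1}(t-r')BS(s+r')f\di r'$; comparing this with the analogous equation for $r'\mapsto S(r')S(s)f$ and invoking uniqueness for the Volterra equation on $[0,\tau]$ (available since $K<1$) yields the claim. The family then extends to all $t\ge 0$ via the semigroup law.

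To identify the generator $G$ of $(S(t))$, the variation of constants formula gives, in $X_{-1}$,
\begin{equation*}
\frac{S(t)f-f}{t}=\frac{T(t)f-f}{t}+\frac{1}{t}\int_0^t T_{-1}(t-s)BS(s)f\di s\longrightarrow A_{-1}f+Bf\quad (t\downarrow 0),
\end{equation*}
so $Gf=(A_{-1}+B)f$ whenever the left-hand side converges in $X$; conversely, if $(A_{-1}+B)f\in X$ then continuity of $s\mapsto T_{-1}(t-s)BS(s)f$ into $X$ allows one to reverse the argument and conclude $f\in\dom(G)$. The main obstacle, I expect, is the semigroup property: the intermediate integrals a priori live in $X_{-1}$, so one must carefully track when they return to $X$ and justify both the change of variables and the identification $T_{-1}(t)\partof_X=T(t)$ at the correct stages. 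The uniform-in-$t$ extension of hypothesis~(ii) is a smaller but similarly delicate point, since the naive zero extension of $u$ need not be continuous.
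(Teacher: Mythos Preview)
The paper does not prove this statement; it explicitly cites it from Engel and Nagel \cite[Chapter III, Corollaries 3.2 and 3.3]{EN} and uses it as a black box in the proof of Proposition~\ref{prop-AM-Desch}. So there is no ``paper's own proof'' to compare against.

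That said, your sketch follows the standard strategy one finds in Engel--Nagel: recast hypotheses (i)--(ii) as a norm bound on an abstract Volterra operator, iterate to build the Dyson--Phillips terms with geometric control $K^n$, sum, and then identify the generator. Two points deserve more care. First, for the converse inclusion $\dom\bigl((A_{-1}+B)\partof_X\bigr)\subseteq\dom(G)$ you appeal to ``continuity of $s\mapsto T_{-1}(t-s)BS(s)f$ into $X$''; this map is only continuous into $X_{-1}$ in general, so the argument as written does not go through. The usual route avoids this by computing the resolvent of $G$ via the Laplace transform of the variation-of-constants identity and matching it with $\bigl(I-(\lambda-A_{-1})^{-1}B\bigr)^{-1}(\lambda-A)^{-1}$, which directly gives the domain. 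Second, your extension of the estimate from $\tau$ to all $t\in[0,\tau]$ is stated loosely; the clean way is to define the Volterra operator as mapping $\rC([0,\tau],X)$ to itself (not just to $X$ at the endpoint) and to show that this map has norm at most $K$, which then yields the uniform-in-$t$ bound automatically.
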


Let us state and prove our result for AM-spaces and positive semigroups in a
special case, using the above theorem.

\begin{proposition}\label{prop-AM-Desch}
Let $E$ be a real AM-Space, $(T(t))_{t\geq 0}$ a
positive $C_0$-semigroup on $E$ with generator $A$. Let $B\in
\cL(E,E_{-1})$ be a positive operator and suppose further that there exists
$\la>\sb(A)$ such that
$K:=\|R(\la,A_{-1})B\|<1$. Then $(A_{-1}+B)\partof_E$
% with $\dom(C):=\{f\in E: (A_{-1}+B)f\in E\}$
is the generator of a positive
$C_0$-semigroup $(S(t))_{t\geq 0}$, and the extrapolation space
$E\mo$ for this semigroup is the same as for $(T(t))_{t\ge0}$.
% which is given by Equations \eqref{DP-1} and \eqref{DP-2}.
\end{proposition}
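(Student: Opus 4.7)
The plan is to apply the Desch--Schappacher perturbation theorem~\ref{EN}: the task reduces to producing $\tau>0$ and $K'\in[0,1)$ such that $J(u):=\int_0^\tau T_{-1}(\tau-s)Bu(s)\dd s$ lies in $E$ and satisfies $\|J(u)\|_E\le K'\|u\|_\infty$ for every $u\in\rC([0,\tau],E)$. Continuity of the lattice operations together with $\|u^\pm(s)\|\le\|u(s)\|$ (available in any Banach lattice) makes the decomposition $u=u^+-u^-$ reduce the problem to positive $u\in\rC([0,\tau],E_+)$ with $\|u\|_\infty\le 1$.

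For such $u$ I would approximate by positive step functions. With a partition $0=t_0<t_1<\dots<t_N=\tau$ and $u_N(s):=u(t_k)$ on $[t_k,t_{k+1})$, uniform continuity gives $u_N\to u$ in sup-norm, and
\begin{equation*}
J(u_N)=\sum_{k=0}^{N-1}\Bigl(\int_{\tau-t_{k+1}}^{\tau-t_k}T_{-1}(r)\dd r\Bigr)Bu(t_k);
\end{equation*}
since $\int_a^b T_{-1}(r)f\dd r\in\dom(A_{-1})=E$ for every $f\in E_{-1}$, each summand is in $E$, hence $J(u_N)\in E$. The AM-identity $\|f\vee g\|=\|f\|\vee\|g\|$ extended to finite suprema produces $v_N:=u(t_0)\vee\cdots\vee u(t_{N-1})\in E_+$ with $\|v_N\|\le\|u\|_\infty$ dominating $u_N$ pointwise. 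Choosing $\mu\ge\la$ large enough that $R(\mu,A_{-1})=\int_0^\infty e^{-\mu r}T_{-1}(r)\dd r$ holds, monotonicity of the positive resolvent yields $\|R(\mu,A_{-1})B\|\le K$. Positivity of $T_{-1}$ and $B$ then gives in $E_{-1}$
\begin{equation*}
0\le J(u_N)\le\int_0^\tau T_{-1}(r)Bv_N\dd r\le e^{\mu\tau}R(\mu,A_{-1})Bv_N,
\end{equation*}
so $\|J(u_N)\|_E\le e^{\mu\tau}K$ by monotonicity of the $E$-norm (both outer members are in $E_+$ by Proposition~\ref{cone}). Applying the same bound to the positive and negative parts of $u_N-u_M$ shows that $\{J(u_N)\}$ is Cauchy in $E$; since $J(u_N)\to J(u)$ in $E_{-1}$, the limit $J(u)$ in fact lies in $E$ with $\|J(u)\|_E\le e^{\mu\tau}K$. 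Shrinking $\tau$ so that $e^{\mu\tau}K<1$ verifies the Desch--Schappacher hypotheses.

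Desch--Schappacher now furnishes $(S(t))$ via the Dyson--Phillips series \eqref{DP-1}--\eqref{DP-2}, and positivity of $T_{-1}$ and $B$ propagates through the recursion to ensure each $S_n(t)$ and hence $S(t)$ is positive. The coincidence of extrapolation spaces follows from the resolvent identity $R(\la,(A_{-1}+B)\partof_E)=R(\la,A_{-1})+R(\la,A_{-1})B\,R(\la,(A_{-1}+B)\partof_E)$, which together with $\|R(\la,A_{-1})B\|=K<1$ shows that the resolvents of $(T(t))$ and $(S(t))$ induce equivalent norms on $E$, hence the same completion. The main obstacle is that $J(u)$ a priori lives only in $E_{-1}$; both the step-function approximation (for membership in $E$) and the AM-identity for finite suprema (for the contraction estimate) are needed in tandem, and it is precisely their interplay with positivity that lifts the bound from $E_{-1}$ to $E$.
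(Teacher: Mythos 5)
Your overall strategy is the paper's: approximate by step functions, use $\int_a^b T_{-1}(r)f\,\dd r\in\dom(A_{-1})=E$ for membership in $E$, dominate a positive step function by the supremum of its values and invoke the AM-identity for finite suprema, bound via a positive resolvent, feed the result into Desch--Schappacher, get positivity of $S(t)$ from the Dyson--Phillips series, and obtain equality of the extrapolation spaces from the identity $\bigl(I-R(\la,A_{-1})B\bigr)R\bigl(\la,(A_{-1}+B)\partof_E\bigr)=R(\la,A)$ on $E$. The one genuine deviation is harmless and even a little slicker: instead of the paper's rescaling to an exponentially stable semigroup with $\la=0$ (where the estimate comes out as exactly $K$), you dominate $\int_0^\tau T_{-1}(r)Bv\,\dd r\le \eul^{\mu\tau}R(\mu,A_{-1})Bv$ for $\mu\ge\max\{\la,0\}$ beyond the growth bound, use that $\la\mapsto\|R(\la,A_{-1})B\|$ is decreasing, and shrink $\tau$ so that $\eul^{\mu\tau}K<1$; since Theorem~\ref{EN} only asks for some $\tau>0$, this is fine (just note explicitly that $\mu>0$ is needed for $\eul^{-\mu r}\ge \eul^{-\mu\tau}$, and that the resolvent identity for the extrapolation spaces presupposes $\la\in\rho\bigl((A_{-1}+B)\partof_E\bigr)$, which your Neumann series provides).

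There is, however, one real gap: the opening reduction to positive $u$. As written (``$\|u^\pm(s)\|\le\|u(s)\|$'' and treating $u^+,u^-$ separately) you can only conclude
\begin{equation*}
\|J(u)\|\le\|J(u^+)\|+\|J(u^-)\|\le 2\,\eul^{\mu\tau}K\,\|u\|_\infty ,
\end{equation*}
and since only $K<1$ is assumed, $2K$ may be $\ge1$; shrinking $\tau$ cannot repair this because the factor $2$ multiplies $K$ itself. Desch--Schappacher needs a constant strictly below $1$ for \emph{all} continuous $u$, so the estimate must not lose the factor $2$. The fix is exactly the paper's lattice argument: the map $u\mapsto J(u)$ is positive, and once you know $J(u^+),J(u^-)\in E$, you have $\pm J(u)\le J(u^+)+J(u^-)=J(|u|)$, i.e.\ $|J(u)|\le J(|u|)$ as an inequality in the Banach lattice $E$ (using $E_+=\Emop\cap E$ from Proposition~\ref{cone}); monotonicity of the norm then gives $\|J(u)\|\le\|J(|u|)\|\le \eul^{\mu\tau}K\,\||u|\,\|_\infty=\eul^{\mu\tau}K\,\|u\|_\infty$, with $|u|\in\rC([0,\tau],E_+)$. (The factor $2$ is harmless in your Cauchy argument for $J(u_N)$; only the final bound needs this care.) With this one-line correction your proof is complete and essentially coincides with the paper's.
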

% \begin{theorem}\label{AM-Desch}
% Let $E$ be a real AM-Space, $(T(t))_{t\geq 0}$ an exponentially stable
% positive $C_0$-semigroup with generator $A$. Suppose further that $B\in
% \cL(E,E_{-1})$ is a positive operator such that
% $K:=\|R(0,A_{-1})B\|=\|A^{-1}_{-1}B\|<1$ holds. Then $C:=A_{-1}+B$ with
% $\dom(C):=\{f\in E: (A_{-1}+B)f\in E\}$ is the generator of a positive
% $C_0$-semigroup $(S(t))_{t\geq 0}$, which is given by Equations \eqref{DP-1}
% and
% \eqref{DP-2}.
% \end{theorem}

The key technical tool in verifying the conditions of Theorem~\ref{EN} will be
the following lemma, which we state separately.

\begin{lemma}\label{x-1}
Let $E$ be a real Banach lattice, $E_{-1}$ the extrapolation space for a
positive $C_0$-semigroup $(T(t))_{t\geq 0}$ on $E$, let $B\in
\cL(E,E_{-1})$ be a positive operator, and let $\tau>0$. Then we have:
\begin{iiv}
\item $(T_{-1}(t))_{t\geq 0}$ is positive.
\item For each step function
$u\rem{(\cdot)=\sum\limits_{n=0}^{N}u_n\chi_{I_n}(\cdot)}\in
\rL^{\infty}([0,\tau];E)$ we have
\begin{equation*} \int_0^{\tau}T_{-1}(s)Bu(s)\dd s\in E.\end{equation*}
% \item For all $f,g\in E$ with $f\leq g$ it follows that
% \begin{equation*} \int_0^{\tau}T_{-1}(s)Bf\dd s\leq\int_0^{\tau}T_{-1}(s)Bg\dd
% s \end{equation*}
% in $E$.
\item For all $f\in E_+$ we have $\int_0^{\tau}T_{-1}(s)Bf\dd s
\in E_+$.
\item If in addition $(T(t))_{t\geq 0}$ is exponentially stable, then we have
\begin{equation*} \int_0^{\tau}T_{-1}(s)Bf\dd
s\leq\int_0^{\infty}T_{-1}(s)Bf\dd s \end{equation*}
in $E$, for all $f\in E_+$.
\end{iiv}
\end{lemma}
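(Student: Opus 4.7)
The plan is to work through the four parts in order, exploiting one principle throughout: the characterization $E_+=\Emop\cap E$ from Proposition~\ref{cone} lets me deduce positivity in $E$ for any element that already lies in $E$ and is the $E_{-1}$-limit of a sequence in $E_+$. For (i), given $f\in\Emop$ I would pick $(f_n)\sse E_+$ with $f_n\to f$ in $E_{-1}$; then $T_{-1}(t)f_n=T(t)f_n\in E_+\sse\Emop$, and closedness of $\Emop$ passes to the $E_{-1}$-limit $T_{-1}(t)f$. For (ii), linearity reduces the claim to $u=g\chi_{[a,b]}$ with $g\in E$, and the identity $\int_a^bT_{-1}(s)Bg\,\dd s=\int_0^bT_{-1}(s)Bg\,\dd s-\int_0^aT_{-1}(s)Bg\,\dd s$ places the integral in $E$ via the general fact that $\int_0^tT_{-1}(s)h\,\dd s\in\dom(A_{-1})=E$ for every $h\in E_{-1}$, applied to $h:=Bg$.

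For (iii), I would approximate $Bf\in\Emop$ in $E_{-1}$ by a sequence $(h_n)\sse E_+$. Positivity of $(T(t))_{t\geq 0}$ gives $\int_0^{\tau}T_{-1}(s)h_n\,\dd s=\int_0^{\tau}T(s)h_n\,\dd s\in E_+$ for each $n$, and continuity of the linear map $h\mapsto\int_0^{\tau}T_{-1}(s)h\,\dd s$ on $E_{-1}$ yields $E_{-1}$-convergence to $\int_0^{\tau}T_{-1}(s)Bf\,\dd s$. By (ii) applied to $u\equiv f$ this limit lies in $E$, and hence in $E_+=\Emop\cap E$. For (iv), exponential stability of $(T(t))_{t\geq 0}$ transfers to $(T_{-1}(t))_{t\geq 0}$ and gives $R(0,A_{-1})Bf=\int_0^\infty T_{-1}(s)Bf\,\dd s$ with convergence in $E_{-1}$ and value in $\dom(A_{-1})=E$; letting $\tau\to\infty$ in (iii) and using Proposition~\ref{cone} again places this value in $E_+$. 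The splitting
\begin{equation*}
\int_0^\infty T_{-1}(s)Bf\,\dd s-\int_0^{\tau}T_{-1}(s)Bf\,\dd s=\int_{\tau}^\infty T_{-1}(s)Bf\,\dd s=T(\tau)\int_0^\infty T_{-1}(s)Bf\,\dd s
\end{equation*}
then exhibits the difference as the image under the positive operator $T(\tau)$ of a positive element of $E$, which lies in $E_+$; this is the asserted inequality.

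The main obstacle I anticipate is the bookkeeping of the two topologies on $E$ and $E_{-1}$: integrals and limits are a priori only meaningful in $E_{-1}$ and must be identified as elements of $E$ before positivity in $E_+$ can be extracted. Part~(ii) is exactly the tool that bridges the two settings, after which $E_+=\Emop\cap E$ does the rest.
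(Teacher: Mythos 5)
Your proof is correct and takes essentially the same route as the paper: part (ii) rests on the same key fact that $\int_0^t T_{-1}(s)h\,\dd s\in\dom(A_{-1})=E$ for $h\in E_{-1}$ (the paper translates the integral by $T_{-1}(t_{n-1})$ rather than writing it as a difference of integrals starting at $0$, a trivial variant), and parts (i), (iii), (iv), which the paper dispatches with the single remark that they follow from Proposition~\ref{cone}, are carried out by you with exactly the intended arguments (approximation from $E_+$, closedness of $\Emop$, and $E_+=\Emop\cap E$).
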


\begin{proof}
(i) This follows because $T_{-1}(t)$ is the continuous extension of
$T(t)$, for all $t\ge0$.
% Let $g\in\Emop$ be arbitrary. We have to show that for all $t\geq 0$
% there exist a sequence $(f^t_n)_{n\in\NN}\subset E_+$, such that
% \begin{equation*} \|T_{-1}(t)g-f^t_n\|_{-1}\longrightarrow 0 \mbox{ for }n\rightarrow\infty. \end{equation*}
% Let $t\geq 0$ be fixed. For $g\in \Emop$ there exists a sequence
% $(g_n)_{n\in\NN}\subset E_+$ with $\|g-g_n\|_{-1}\longrightarrow 0$ for
% $n\rightarrow\infty$. Define $f^t_n:=T(t)g_n$, then \newline
% $\|T_{-1}(t)g-f^t_n\|_{-1}=\|T_{-1}(t)g-T(t)g_n\|_{-1}\leq
% M_t\|g-g_n\|_{-1}\longrightarrow 0$ for $n\rightarrow\infty$ and a constant
% $M_t>0$.

(ii) Let $u\in \rL^{\infty}([0,\tau];E)$ be a step function, i.e.,
$u(t)=\sum\limits_{n=1}^{N}u_n\chi_{I_n}(t)$ where $u_1,\dots,u_N\in E$,
$I_1,\dots,I_N\sse[0,\tau]$ are pairwise disjoint
intervals with $\bigcup_{n=1}^{N}I_n=[0,\tau]$, and where $\chi_{I_n}$ denotes 
the indicator function of $I_n$.
% $\chi_{[t_{n-1},t_{n}[}(\cdot)$ is the indicat function on the
% interval $[t_n,t_{n+1}[$.
It suffices to show
\[ \int_{I_n}T_{-1}(s)Bu_n\dd s=\int_{t_{n-1}}^{t_n}T_{-1}(s)Bu_n\dd s\in E, \]
where $(t_{n-1},t_n)\sse I_n\sse [t_{n-1},t_n]$. With the substitution
$s'=s-t_{n-1}$ we get
\begin{align*} \int_{t_{n-1}}^{t_{n}}T_{-1}(s)Bu_n\dd s
&=\int_{0}^{t_{n}-t_{n-1}}T_{-1}(s+t_{n-1})Bu_n\dd s \\
&=T_{-1}(t_{n-1})\int_{0}^{t_{n}-t_{n-1}}T_{-1}(s)Bu_n\dd s.
\end{align*}
Because $(T_{-1}(t))_{t\geq 0}$ is a $C_0$-semigroup on $E_{-1}$
with generator $A_{-1}$, we have that
$\int_{0}^{t_{n}-t_{n-1}}T_{-1}(s)Bu_n\dd s$ belongs to $\dom(A_{-1})=E$, and
the assertion follows.

Statements (iii) and (iv) follow directly from Proposition \ref{cone}.
\end{proof}

% We are in the position to prove our main result. (\textbf{noch nicht
% richtig formuliert!!!})

\begin{proof}[Proof of Proposition \ref{prop-AM-Desch}]
In the first (main) part of the proof we will assume that the given semigroup
is exponentially stable, and that $\la =0$.

Let $\tau>0$. Let us denote by $\rT([0,\tau];E)$ the vector space of $E$-valued
step functions. In fact, $\rT([0,\tau];E)$ is a normed vector lattice, a
sublattice of $\rL^\infty([0,\tau];E)$. We define a linear operator
$R\colon\rT([0,\tau];E)\to E$ by
\[
Ru := \int_0^\tau T\mo(\tau-s)Bu(s)\di s.
\]
Note that Proposition \ref{cone} implies that $R$ is a positive operator. We
show that
\begin{equation}\label{equ-main-est}
\|Ru\|_E \le K\|u\|_\infty,
\end{equation}
for all $u\in\rT([0,\tau];E)$.
\sloppy

% The positivity of $R$ (and standard manipulations) show that it is sufficient
% to show \eqref{equ-main-est} for all $u\in\rT([0,\tau];E)_+$. So,
First, let
$u$ be a positive step function, $u=\sum_{n=1}^Nu_n\chi_{I_n}$ as above, with
$u_1,u_2,\dots,u_N\ge0$. Then $0\le u\le
z\chi_{[0,\tau]}$, where $z:=\sup_nu_n$.
We conclude, with the help of
Proposition \ref{cone} and Lemma \ref{x-1}, that
\begin{align*}
\left\|Ru\right\|&\leq\left\|\int_0^{\tau}T_{-1}(\tau-s)Bz\dd s \right\|\\
&\leq \left\|\int_0^{\infty}T_{-1}(\tau)Bz\dd s \right\|\leq
\|A^{-1}_{-1}B\|\|z\|\\
&=K\|z\|=K\|\sup\limits_nu_n\|=K\sup\limits_n\|u_n\|=K\|u\|_{\infty},
\end{align*}
where we have used the AM-property of $E$ in the last line. If $u$ is an
arbitrary
$E$-valued step function, then $u=u^+-u^-$, $|Ru|=|Ru^+-Ru^-|\le
Ru^++Ru^-=R|u|$, hence $\|Ru\|\le\|R|u|\|\le K\||u|\|_\infty =K\|u\|_\infty$.
\fussy

The estimate \eqref{equ-main-est} implies that $R$ possesses a (unique
linear) continuous extension -- still denoted by $R$ -- to the closure of
$\rT([0,\tau];E)$ in $\rL^\infty([0,\tau];E)$. This closure contains
$\rC([0,\tau];E)$, and the estimate \eqref{equ-main-est} carries over to all
$u$ in the closure.

If $u\in \rC([0,\tau];E)$, and $(u_n)_{n\in\NN}$ is a sequence in
$\rT([0,\tau];E)$ converging to $u$ uniformly on $[0,\tau]$, then $Ru_n\to Ru$
in $E$. But also
\[
Ru_n = \int_0^\tau T\mo(\tau-s)Bu_n(s)\di s\to
                           \int_0^\tau T\mo(\tau-s)u(s)\di s
\]
in $E\mo$, because $B\colon E\to E\mo$ is continuous and $(T\mo(t))_{t\ge0}$ is
bounded on $[0,\tau]$. This implies that $\int_0^\tau T\mo(\tau-s)Bu(s)\di s=
Ru\in E$, and that
\[
\left\|\int_0^\tau T\mo(\tau-s)Bu(s)\di s\right\|\le K\|u\|_\infty.
\]
% holds for all $u\in\rC([0,\tau];E)$.

Therefore both conditions in Theorem \ref{EN} are satisfied;
hence $(A_{-1}+B)\partof_E$ generates a $C_0$-semigroup $(S(t))_{t\geq 0}$
which is given by the Dyson--Phillips series (see Equations \eqref{DP-1} and
\eqref{DP-2}). Using Lemma~\ref{x-1}(i) and Remark~\ref{rem-pos} we conclude
that the iterates $S_n(t)$ as well as the semigroup operators $S(t)$ are
positive. This shows all the statements for the present case, except for the
assertion concerning the extrapolation spaces.
% immediately
% see that the generated semigroup $(S(t))_{t\geq
% 0}$ is positive.

For the general case we note that from a basic rescaling procedure in semigroup
theory we know that $A$ is the generator of the (positive)
$C_0$-semigroup $(T(t))_{t\geq 0}$ if and only if $A-\lambda$ is the generator
of the (positive) $C_0$-semigroup $(\eul^{-\lambda t}T(t))_{t\geq 0}$. Observe
that the function $(\sb(A),\infty)\ni\la\mapsto \|R(\la,A_{-1})B\|$ is
decreasing.
Now choose
$\lambda>\sb(A)$ such that $A-\lambda$ generates a positive exponentially
stable $C_0$-semigroup and such that $\|R(\la,A_{-1})B\|<1$. Then the case
treated so far implies that
$(A_{-1}-\lambda + B)\partof_E$ generates a positive $C_0$-semigroup.

Now we show the equality of the extrapolation spaces. We choose
\[\la>\max\bigl\{\sb(A),\sb\bigl((A\mo+B)\partof_E\bigr)\bigr\}\] and such that
$\|R(\la,A_{-1}))B\|<1$. From the identity
\[
(\la-A\mo -B) = (\la-A\mo)\bigl(I-(\la-A\mo)\tmo B\bigr)
\]
we obtain
\[
(\la-A\mo -B)\tmo = \bigl(I-(\la-A\mo)\tmo B\bigr)\tmo(\la-A\mo)\tmo.
\]
Restricting this equality to $E$ we conclude that
\[
(\la-(A\mo -B)\partof_E)\tmo = \bigl(I-(\la-A\mo)\tmo B\bigr)\tmo(\la-A)\tmo.
\]
In view of the continuous invertibility of the first operator on the right hand
side, this equality shows that the $\|\cdot\|\mo$-norms corresponding to $(A\mo
-B)\partof_E$ and $A$ are equivalent on $E$, and therefore the completions are
the same.
\end{proof}

% \sloppy
\begin{proof}[Proof of Theorem \ref{AMresult}] The following is an
adaptation of the proof given in Voigt \cite[Proof of Theorem 0.1]{Voi89}. It is
assumed that there exists $\la>\sb(A)$ such that $\spr\bigl((\la-A\mo)^{-1} B\bigr)
<1$, which by Theorem~\ref{thm-res-pos} is equivalent to the requirement
$\la\in\rho(A\mo+B)$, $(\la-A\mo - B)\tmo \ge0$. From Theorem~\ref{thm-res-pos}
we then conclude that $\la\in\rho(A\mo+sB)$,
\[
(\la-A\mo)\tmo \le (\la - A\mo - sB)\tmo \le (\la-A\mo-B)\tmo
\]
for all $s\in(0,1)$. We choose $n\in\NN$ such that $\|(\la-A\mo-B)\tmo B\|<n$.
This implies
$\|(\la-A\mo-(j/n)B)\tmo(1/n)B\|<1$
for all $j=0,\ldots,n-1$.

Applying Proposition~\ref{prop-AM-Desch} successively to the operators
\[
A,(A\mo+(1/n)B)\partof_E,\ldots,(A\mo+((n-1)/n)B))\partof_E,
\]
with the
perturbation $(1/n)B$, the desired result is obtained. An important point in
this sequence of steps is that the extrapolation space $E\mo$ does not change;
this issue is taken care of by the last statement of
Proposition~\ref{prop-AM-Desch}.
\end{proof}

% \fussy
\begin{remark}\label{rem-main-result}
(a) In our main theorem, Theorem~\ref{AMresult}, the hypothesis that `there
exists $\lambda>\sb(A)$ such that
$\spr\bigl((\lambda-A_{-1})^{-1}B\bigr)<1$' could have been formulated
equivalently as `there exists $\lambda>\sb(A)$ such that $\la\in\rho(A_{-1} +
B)$ and $(\la -A_{-1} - B)\tmo \ge 0$', or else as `$A_{-1}+B$ is
resolvent positive'. This is a consequence of Theorem~\ref{thm-res-pos}.

(b) Similarly, in Theorem~\ref{cor:miya}, as it is stated in Voigt
\cite[Theorem 0.1]{Voi89}, the condition that `there exists $\la > \sb(A)$ such
that $\spr(B(\la - A)\tmo) < 1$' appears as `$A+B$ is resolvent positive'. The
equivalence of these conditions is a consequence of Voigt \cite[Theorem 1.1]{Voi89}.
\end{remark}

%Example

\section{Examples}\label{examples}

We start this section with an application of Theorem \ref{AMresult}.

\begin{example}\label{application}
Let $h\in\rL^1(0,1)_+$.
Consider the partial differential equation

\begin{align*}
\frac{\partial}{\partial t}u(t,x)&=\frac{\partial}{\partial
x}u(t,x)+\int_0^1u(t,y)\di y\cdot h(x),\qquad
x\in[0,1], t\geq 0, \\
u(0,x)&=u_0(x),\quad u(t,1)=0, \qquad x\in[0,1], t\geq 0.
\end{align*}
Trying to interpret this equation as an
abstract Cauchy problem on the AM-space $E:=  \{f\in \rC([0,1]):f(1)=0 \}$ with
norm $\|f\|_{\infty}:=\sup\limits_{x\in[0,1]}|f(x)|$,
\begin{align*}
\dot u(t)&=Au(t)+Bu(t) \\
u(0)&=u_0,
\end{align*}
where the operator $A$ is defined by
\begin{equation}\label{ableitung}
Af=f', \qquad \dom(A)=\{f\in \rC^1[0,1]:\,f(1)=f'(1)=0 \},
\end{equation}
one realises that it is not evident how to associate the right
hand side of the equation with a linear operator in $E$.
In Engel and Nagel \cite[Chapter II, Example 3.19(i)]{EN}, it is shown that $A$ 
is the
generator of the nilpotent positive left-shift semigroup $(T(t))_{t\geq 0}$ with
$\sb(A)=-\infty$, given by
\begin{equation}\label{leftshift}
(T(t)f)(x)=\begin{cases}
f(s+t) & \text{if }x+t\leq 1, \\
0 & \mbox{otherwise}.
\end{cases}
\end{equation}

We want to calculate the extrapolation space of $E$ for the generator $A$. Our
aim is to show the equality
\begin{equation}\label{EinD}
E\mo = \{ g\in\mathscr{D}(0,1)':\, g=\partial f \mbox{ for
some }f\in E \},
\end{equation}
where $\mathscr{D}(0,1)=C_{\rm c}^\infty(0,1)$ denotes the usual space of
`test functions', with the inductive limit topology, $\mathscr{D}(0,1)'$ its
dual space, and $\partial$ is differentiation on distributions.

First we note that the `standard embedding' $j\colon
E\hookrightarrow\mathscr{D}(0,1)'$ can be extended to a mapping
\[
j\mo\colon E\mo \to  \mathscr{D}(0,1)',
\]
defined by
\[
\langle j\mo(g),\phi\rangle:=\left\langle
A^{-1}_{-1}g,-\phi'\right\rangle
=-\int_0^1\bigl(A^{-1}_{-1}g\bigr)(x)\phi'(x)\di x.
\]
Indeed, if $g\in E$, then
\[
\langle j\mo(g),\phi\rangle = - \int_0^1\bigl(A\tmo g)(x)\phi'(x)\di x
=\int_0^1\bigl(A\tmo g)'(x)\phi(x)\di x = \int_0^1 g(x)\phi(x)\di x,
\]
which shows that $j\mo$ is an extension of $j$. In fact, the definition of
$j\mo$ shows that $j\mo = \partial\circ j\circ A\mo\tmo$, and this formula shows
that $j\mo$ maps $E\mo$ continuously to $\mathscr D(0,1)'$. Finally we note that
$j\mo$ is injective. Indeed, if $g\in E\mo$ is such that $j\mo(g)=0$, then
$\int_0^1\bigl(A\mo\tmo g\bigr)(x)\phi'(x)\di x =0$ for all $\phi\in\mathscr
D(0,1)$, which implies that the continuous function $A\mo\tmo g$ is constant,
and this constant is zero because $(A\mo\tmo g\bigr)(1) =0$. The injectivity of
$A\mo\tmo$ then implies $g=0$.

Rewriting the above formula for $j\mo$ as
\[
j\mo\circ A\mo =\partial\circ j,
\]
valid on $E$, we see the validity of \eqref{EinD} as well as the property that
in the image of $E$ in $\mathscr D(0,1)'$ the operator $A\mo$ acts as
differentiation $\partial$.

Next we are going to show that, in the image of $E\mo$ in $\mathscr D(0,1)'$,
one has
\begin{align}\label{equEmo}
\begin{split}
\Emop &= \{ \mu :\, \mu\text{ a finite continuous positive Borel measure on
}(0,1) \}\\
&= \{ g\in\mathscr{D}(0,1)':\, g=\partial f \mbox{ for
some increasing function }f\in E \},
\end{split}
\end{align}
where the second equality is standard and will not be discussed further.

So, let $g\in\Emop$. Then there exists a sequence $(g_n)_{n\in\NN}$ in $E_+$
such that $g_n\to g$ in $E\mo$, and thus in $\mathscr D(0,1)'$. Hence
\[
\int_0^1g_n(x)\phi(x)\di x \to \langle g,\phi\rangle\qquad(n\to\infty),
\]
for all $\phi\in\mathscr D(0,1)$, and therefore $\langle g,\phi\rangle\ge0$ for
all $0\le\phi\in\mathscr D(0,1)$, i.e., $g$ is a `positive distribution'.
It is known that this implies that $g$ is a positive Borel measure;
see Schwartz \cite[Chap.\,I, Th\'eor\`eme V]{Schw}. As $g$ is also the distributional
derivative of a function $f\in E$, it follows that $f$ is increasing and that
$\mu$ is finite and continuous (i.e., does not have a discrete part).

For the reverse inclusion, let $f\in E$ be an increasing function. Then one
shows by standard methods of Analysis that $f$ can be approximated in $E$ by
a sequence $(f_n)_{n\in\NN}$ in $E\cap \rC^1[0,1]$, all $f_n$ increasing and
vanishing in a neighbourhood of $1$. This implies that $f_n'\in E_+$ for all
$n\in\NN$, and that $f_n'\to A\mo f=\partial f$ ($n\to\infty$) in $E\mo$; hence
$g:=\partial f\in\Emop$.

As a consequence of \eqref{equEmo} we obtain that $\Emop-\Emop$, the set of
finite continuous signed measures, is a proper subset of $E\mo$ (because the
distributional derivative of a function in $E$ that is not of bounded variation
is not a measure). This implies that $\Emop$ is not generating in $E\mo$, and
therefore $E\mo$ is not a lattice -- let alone a Banach lattice.

\medskip
Now we come back to treating the initial value problem stated at the beginning.
There exists an increasing function $f\in E$ such that $h = \partial f$, and
therefore $h\in\Emop$. Hence the operator
% Defining the function
% \begin{equation*}  g(x)=\begin{cases}  -\frac 12 &\text{if } x\in [0,\frac 12[,
% \\
% x-1 & \text{if }x\in[\frac 12,1], \end{cases} \end{equation*}
% we see that this function lies in $E$ and we have
% $\partial g=(A_{-1}g)=\chi_{[\frac 12,1]}$. By what was shown above we
% conclude that $\chi_{[\frac 12,1]}\in\Emop$.
%
% Furthermore,
$B\in \cL(E,E_{-1})$, defined by $g\mapsto\int_0^1g(x)\dd
x\cdot h$, is a positive operator.
% as we have shown that
% $\chi_{[\frac 12,1]}\in\Emop$.
We calculate
\begin{align*}
\|(\la-A_{-1})\tmo B\|
&=\sup_{\|g\|=1}\|(\la-A_{-1})^{-1}Bg\|_{\infty}\\
&=\sup_{\|g\|=1}\left\|\left(\int_0^1g(x)\dd x\right)\cdot
(\la-A\mo)^{-1}h\right\|_{\infty}\\
&= \|(\la-A\mo)\tmo h\|_E.
% =\sup_{0\le t\le 1}\int_0^1\eul^{-\la s}h(t+s)\di s\\
% &=\sup_{0\le t\le 1}\Bigl(\int_0^\delta \eul^{-\la s}h(t+s)\di s
%                   +\int_\delta^1\eul^{-\la s}h(t+s)\di s\Bigr)\\
% &\le\sup_{0\le t\le 1}\int_t^{t+\delta}h(s)\di s +
% \eul^{-\la\delta}\|h\|_1
\end{align*}
It is a standard fact from semigroup theory that $\|(\la-A\mo)\tmo h\|_E\to 0$
as $\la\to\infty$, for all $h\in E\mo$; hence
$\|(\la-A\mo)\tmo B\|<1$ for large $\la$. Therefore Theorem \ref{AMresult}
implies that $(A_{-1}+B)\partof_E$ is
the generator of a positive semigroup.
\end{example}

\begin{remark}
(a) Clearly, the function $h$ in Example \ref{application} could also be
replaced by measures in $\Emop$ which are not absolutely continuous
with respect to the Lebegue measure.

(b) If, more strongly, $h\in\rL_\infty(0,1)$ (but not necessarily
positive), then $h$ belongs to the extrapolated Favard class
$F_0=\rL_\infty(0,1)$ of the semigroup $(T(t))_{t\ge0}$, and the generator
property of $(A\mo+B)\partof_E$ follows from Engel and Nagel \cite[Chapter III,
Corollary 3.6]{EN}.
\end{remark}

Example \ref{application} shows that $E_{-1}$ need not be a Banach lattice if
$E$ is an
AM-space without order unit. Next we present a counterexample of an AM-Space
with order unit.

\begin{example}\label{AMmitunit}
Consider the space $E=\{f\in \rC[0,1]:f(0)=f(1) \}$ and the operator
\begin{equation*}
Ah=h', \qquad \dom(A)=\{f\in \rC^1[0,1]\cap E:\,f'(0)=f'(1) \}.
\end{equation*}
It is shown in Nagel \cite[Chapter A-I, page 11]{N}, that $A$ is the generator of the positive periodic bounded semigroup $(T(t))_{t\geq 0}$ %with $0<\sb(A)<1$
given by
\begin{equation*}
(T(t)f)(s)=f(y) \qquad \mbox{ for }y\in[0,1], y=s+t \mbox{ mod }1.
\end{equation*}

The description of $E\mo$ and $\Emop$ will be analogous to the description in
Example~\ref{application}, but slightly more involved because in the present
case the operator $A$ is not invertible. Our first aim is to obtain the
representation
\begin{equation}\label{equEmo-per}
E\mo = \{ h\in\mathscr D(0,1)':\, h=f-\partial f\text{ for some }f\in E\}.
\end{equation}
We refer to Engel and Nagel \cite[Chapter II, Example~5.8(ii)]{EN} for this
expression in a
similar context.
As above, $j\colon E\to \mathscr
D(0,1)'$ will be the standard injection. We define its extension to $E\mo$
by
\[
\langle j\mo(h),\phi\rangle :=\int_0^1\bigl((1-A\mo)\tmo
h\bigr)(x)(\phi+\phi')(x)\di x.
\]
Then $j\mo=(1-\partial)\circ j\circ(1-A\mo)\tmo$. To
show the injectivity of $j\mo$, assume that $h\in E\mo$ is such that
$j\mo(h)=0$. Then $f:=(1-A\mo)\tmo h\in E$ satisfies
\[
0=\int_0^1 f(x)(\phi+\phi')(x)\di
x=\int_0^1f(x)\frac{1}{\exp(x)}(\exp\cdot\phi)'(x)\di x
\]
for all $\phi\in\mathscr D(0,1)$. This implies that $f\frac{1}{\exp}$ is
constant, and therefore $f(0)=f(1)$ shows that $f=0$, $h=0$. Rewriting the
formula for $j\mo$ as
\[
j\mo\circ(1-A\mo) = (1-\partial)\circ j
\]
we obtain \eqref{equEmo-per}. For the following, it will be important to keep
in mind the identity $f-\partial f = \exp\partial\bigl(\frac{f}{\exp}\bigr)$,
for $f\in E$.

For the present context we are going to show that
\begin{equation}\label{equEmop-per}
\Emop = \{\mu:\, \mu \text{ a finite continuous positive Borel measure on
}(0,1)\}.
\end{equation}
The inclusion `$\sse$' is shown as in Example~\ref{application}. For the
reverse inclusion let $\mu$ be as on the right hand side of
\eqref{equEmop-per}. Then there exists $g\in \rC[0,1]$ with $\frac{g}{\exp}$
increasing and such that
$\frac{1}{\exp}\mu=\partial\bigl(\frac{g}{\exp}\bigr)$. Note that then also
\[
\frac{1}{\exp}\mu=\partial\bigl(\frac{1}{\exp}(g + c\exp)\bigr)
\]
for all $c\in\RR$. It is clear that there exists a unique $c\in\RR$ such that
$f:=g+c\exp\in E$, and this implies
$\mu=\exp\partial\bigl(\frac{f}{\exp}\bigr)= f-\partial f$.
In order to show that $\mu\in\Emop$ we still have to approximate $f$ suitably.
To do so we first extend $f$ to $\RR$ as a continuous periodic function. Then
we define $f_k:=\rho_k*f$, where $(\rho_k)_{k\in\NN}$ is a $\delta$-sequence in
$C_{\rm c}^\infty(\RR)$. Then $f_k\in \rC^\infty(\RR)\cap E$ for all $k\in\NN$,
$f_k\to f$ in $E$ as $k\to\infty$, and it is not too difficult to show that
$\frac{f_k}{\exp}$
is increasing for all $k$. Then $f_k -f_k'\in E_+$ for all $k\in\NN$, and
$f_k-f_k'\to f-\partial f$ ($k\to\infty$) in $E\mo$ implies
$\mu=f-\partial f\in\Emop$.

From \eqref{equEmo-per} and \eqref{equEmop-per} it follows as in
Example~\ref{application} that $E\mo$ is not a lattice and a fortiori not a
Banach lattice.
\end{example}

Our final example shows that, for an operator $B\in \cL(E,E_{-1})$ to be
positive it is not sufficient that $R(\lambda,A_{-1})B$ is positive in $\cL(E)$
for some $\la>\sb(A)$.

\begin{example}\label{RBpositive}
Let $E$ and $A$ be as in Example~\ref{application}. Define
\[
h:=-\chi_{[0,1/2)}+ \chi_{[1/2,1]}.
\]
Then the description of $\Emop$ in Example~\ref{application} shows that $h$ is
not positive in $E\mo$, because $h=\partial g$, for the function $g\in E$ given
by
\begin{equation*}
g(x)=\begin{cases}  -x & \text{if }x\in [0,\frac 12[, \\ x-1 &
\text{if }x\in[\frac 12,1], \end{cases}
\end{equation*}
which is not increasing. However, $(-A\mo)\tmo h = -g$ belongs to $E_+$.

Defining the operator $B\in\cL(E,E\mo)$ by
\[
Bf:=\int_0^1f(x)\di x\cdot h
\]
we see that $(0-A\mo)\tmo B\in\cL(E)$ is positive, but $B$ is not positive.
\end{example}

\section*{Acknowledgement}
The authors are grateful to B\'alint Farkas, Sven-Ake Wegner  and Hans Zwart for fruitful
discussions and helpful comments. The first author gratefully acknowledges
financial support by  the German Academic Exchange Service (DAAD).

\bibliographystyle{amsplain}

\end{document}